\newtheorem{lemma}{Lemma}
\newtheorem{proposition}{Proposition}
\theoremstyle{remark}
\newtheorem*{remark}{\bf Remark}
\renewcommand{\d}{{\mathrm d}}
\newcommand{\lcm}{\operatorname{lcm}}
\newcommand{\ord}{\operatorname{ord}}
\newcommand{\bm}{{\boldsymbol m}}
\newcommand{\cM}{{\mathcal M}}
\newcommand{\cP}{{\mathcal P}}
\begin{document}

\title[The irrationality measure of $\pi$ is at most $7.103205334137\dots$]{The irrationality measure of $\boldsymbol\pi$ \\ is at most $\mathbf{7.103205334137\dots}$}

\date{13 December 2019. \emph{Revised}: 7 January 2020}

\author{Doron Zeilberger}
\address{Department of Mathematics, Rutgers University (New Brunswick), Hill Center-Busch Campus, 110 Frelinghuysen Rd., Piscataway, NJ 08854-8019, USA}
\email{doronzeil@gmail.com}

\author{Wadim Zudilin}
\address{Department of Mathematics, IMAPP, Radboud University, PO Box 9010, 6500~GL Nijmegen, Netherlands}
\email{w.zudilin@math.ru.nl}

\subjclass[2010]{Primary 11J82; Secondary 11Y60, 33F10, 33C60}
\keywords{$\pi$; irrationality measure; experimental mathematics; Almkvist--Zeilberger algorithm}

\dedicatory{In memory of Naum Il'ich Feldman (1918--1994)}

\begin{abstract}
We use a variant of Salikhov's ingenious proof that the irrationality measure of $\pi$ is at most $7.606308\dots$
to prove that, in fact, it is at most $7.103205334137\dots$\,.
\\
\textsc{Accompanying Maple package}.
While this article has a fully rigorous human-made and human-readable proof of the claim in the title,
it was \emph{discovered} thanks to the Maple package \texttt{SALIKOHVpi.txt} available from\\
\url{http://sites.math.rutgers.edu/~zeilberg/mamarim/mamarimhtml/pimeas.html}.
\end{abstract}

\maketitle

\part*{Introduction: How irrational is $\pi$?}

Every number that is not rational (a quotient of integers) is \emph{irrational}, but not all
irrational numbers are born equal. To measure `how irrational' is a given number $x$,  
we define (see~\cite{Wei}) the \emph{irrationality measure} $\mu$ (also called the \emph{irrationality exponent})
as the smallest number $\mu$ such that
$$
\biggl| x- \frac{p}{q}\bigg| > \frac{1}{q^{\mu +\epsilon}}
$$
holds for any $\epsilon>0$ and all integers $p$ and $q$ with sufficiently large~$q$.

It is not hard to see that the irrationality measure of $e$ is $2$, but the exact irrationality measure
of $\pi$ is unknown. It became a \textbf{competitive sport} to find
lower and lower upper bounds for the irrationality measure of $\pi$.
The first upper bound, of $42$,  was proved in 1953  by Kurt Mahler \cite{Mah53}.
This record has been subsequently improved by Maurice Mignotte,  Gregory Chudnowsky, and in three better-and-better
articles, by Masayoshi Hata (see the references in \cite{Hat93,Sal10}). The current ``world record'' is due to Vladislav Khasanovich Salikhov who
proved the upper bound of $7.606308$. This was announced \cite{Sal08} in 2008 and published \cite{Sal10} in 2010.
In this article we tweak Salikhov's method to beat his more than ten-year-old record to set a new world record
of $7.103205334137\dots$\,.

Since  the aim of our paper is not \emph{just} to state and prove yet another record that would most likely
be broken again sooner or later (we hope that not that soon, unless it is by ourselves\dots), but
to also explain our ``experimental mathematics'' methodology that 
pointed the way to the ultimate human-generated formal proof, to be given in Part~\ref{partII}.
We also describe a fully rigorous, and fully computer-generated,
proof of a coarser upper bound that is much better than many of the previous world records. This will be
done in Part~\ref{partI}. Readers who are not interested in the process of discovery, or computer proofs, can
go straight to Part~\ref{partII}, that is a self-contained human-generated and human-readable proof.

\medskip
We are grateful to Vladislav Salikhov for pointing out a mistake in the previous version of our Lemma~\ref{lem2} below.
Fixing the gap required from us to employ new techniques, so that at the end this manuscript is more than just tweaking Salikhov's construction in \cite{Sal10}.

\part{The experimental mathematics way}
\label{partI}

\subsection*{General strategy}

A good way to gain immortality, and become a \emph{famous} {\bf person}, is to be the first one to prove that a \emph{famous} {\bf constant},
let's call it $x$, is irrational. 
One way to achieve this is to come up with two
sequences of positive {\bf integers} $\{a_n\}$ and $\{b_n\}$, and a {\bf positive}, explicit real number $\delta$ such that
there is a constant $C$, independent of $n$, such that, for all $n>0$,
$$
\biggl|x- \frac{a_n}{b_n}\biggr| \le \frac{C}{b_n^{1+\delta}}.
$$
This immediately implies the irrationality of $x$ and at the same time establishes an upper bound,
namely  $1+1/\delta$, for the irrationality measure of $x$.

This is exactly how, in 1978, the $64$-year old Roger Ap\'ery became immortal by doing the above with  $x=\zeta(3)$  (i.e., $\sum_{n=1}^{\infty} n^{-3}$);
see Alf van der Poorten's classic exposition~\cite{vdP79}.

Shortly after, Frits Beukers \cite{Beu79} gave a much simpler rendition of Ap\'ery's construction by introducing a certain explicit triple integral
$$
I(n)= \int_0^1 \!\!\!  \int_0^1 \!\!\! \int_0^1
\biggl( \frac{x(1-x)y(1-y)z(1-z)}{1-(1-xy)z)} \biggr)^n \frac{\d x\,\d y\,\d z}{1-(1-xy)z},
$$
and pointing out that
\begin{itemize}
\item[(i)] $I(n)$ is small and can be explicitly bounded,
\item[(ii)] $I(n)=A(n) +B(n) \zeta(3)$ for certain sequences of rational numbers $A(n),B(n)$ that can be explicitly bounded, and
\item[(iii)] $A(n)\lcm(1,2, \dots, n)^3$ and  $B(n)$ are integers.
\end{itemize}
Since thanks to the Prime Number Theorem $\lcm(1,2,\dots,n)$ grows like $e^{n+o(n)}$ as $n\to\infty$, everything followed.

Shortly after \cite{AR79}, Krishna Alladi and Michael Robinson used one-dimensional analogs to reprove the irrationality of $\log 2$,
and established an upper bound of $4.63$ for its irrationality measure (subsequently improved, see \cite{Wei}) by considering the simple integral
$$
I(n)= \int_0^1 \biggl( \frac{x(1-x)}{1+x} \biggr)^n \,\frac{\d x}{1+x}.
$$
Our coming manuscript \cite{ZZ20} is dedicated to further exploration of this theme.

\subsection*{An experimental mathematics redux of Salikhov's approach}

Salikhov \cite{Sal10} essentially uses the same strategy, but with the far more complicated integral
$$
I(n)=-i\int_{4-2i}^{4+2i}\biggl(\frac{(x-4+2i)^6(x-4-2i)^6(x-5)^6(x-6+2i)^6(x-6-2i)^6}
{x^{10} (x-10)^{10}}\biggr)^n\,\frac{\d x}{x(x-10)}.
$$
He then used {\bf partial fractions} to claim that
$$
I(n)=A(n) + B(n) \pi,
$$
for some sequences of $\{A(n)\} \,, \,\{B(n)\}$ of \emph{rational numbers}. 

Using the \textbf{saddle-point method}, he bounded $I(n)$ and $A(n)$, $B(n)$.

He then proved that if one sets
$$
A'(n) =  \lcm(1,2,\dots, 10n) \biggl(\frac{25}{32}\biggr)^nA(n)
\quad\text{and}\quad 
B'(n) =  \lcm(1,2,\dots, 10n) \biggl(\frac{25}{32}\biggr)^nB(n),
$$
then $A'(n)$ and $B'(n)$ are {\bf integer sequences}, and defining
$$
I'(n) = \lcm(1,2,\dots,10n) \biggl(\frac{25}{32}\biggr)^n I(n),
$$
using $\lcm(1,2,\dots,10n)=O(e^{10n+o(n)})$, one can explicitly bound $A'(n),B'(n),I'(n)$,
and $I'(n)$ being small and $B'(n)$ being big one can get a crude upper bound for the irrationality measure,
using the fact that $A'(n)/B'(n)$  approximate  $\pi$.

Finally, the hard part was to come up with `additional saving', a sequence of integers $F(n)$, 
such that $A''(n)=A'(n)/F(n)$ and $B''(n)=B'(n)/F(n)$ are still integers.
Setting $I''(n)=I'(n)/F(n)$ he squeezed more
juice out of it, getting a larger $\delta$ and hence a smaller irrationality
measure $1+1/\delta$, setting the current record of $7.606308\dots$\,.

Our approach is different. We do not use partial fractions, but rather the fact, that thanks to 
the Almkvist--Zeilberger algorithm \cite{AZ90}, there exists a third-order linear recurrence equation of the form
$$
p_0(n) I(n)+ p_1(n) I(n+1) + p_2(n) I(n+2) + p_3(n) I(n+3) = 0 , 
$$
for some explicit polynomials $p_0(n), p_1(n), p_2(n), p_3(n)$. 
To save space, we do not reproduce it here, but refer the reader to the following webpage:\\
\url{http://sites.math.rutgers.edu/~zeilberg/tokhniot/oSALIKHOVpi2.txt}.

That web-page gives a new, computer-generated proof of the crude upper bound, only using the
recurrence and the so-called Poincar\'e lemma that gives the asymptotics of $A(n)$, $B(n)$ and $I(n)$ from which
it is immediate to bound $A'(n)$, $B'(n)$, and $I'(n)$. The only non-rigorous part in our
approach is the  study of the extra divisor $F(n)$, whose growth we estimate empirically.

For details see the above-mentioned computer-generated  article.

\subsection*{Tweaking Salikhov's integral. Looking where to dig}

Looking at Salikhov's integral, it is natural to consider the more general integral
$$
I_{A,B}(n) =
-i\int_{4-2i}^{4+2i}\frac{(x-4+2i)^{2An}(x-4-2i)^{2An}(x-5)^{2An}(x-6+2i)^{2An}(x-6-2i)^{2An}}
{x^{2Bn+1} (x-10)^{2Bn+1}}\,\d x,
$$
where Salikhov's integral is the special case $I_{3,5}(n)$. Perhaps we can do better? But before we invest
time and energy, trying out many choices of $A$ and $B$, it makes sense to
do things \emph{empirically}, crank out, say, 300 terms of the examined sequence and see whether it yields good `deltas'.

Alas, even Maple and Mathematica will start to complain if we use the definition for, say $n=300$. Luckily, for
each specific $A$ and $B$, Shalosh B.~Ekhad can quicky use the Almkvist--Zeilberger algorithm \cite{AZ90} to crank out
many terms, and thereby get very good estimates for the `deltas'. This initial \emph{reconnaissance} is very fast and
gives you an indication on \emph{where to dig}.

This is implemented in procedure \texttt{BestAB} in the Maple package \texttt{SALIKOHVpi.txt} mentioned above.
Typing \texttt{BestAB(10,300);} gives the following computer-generated article:\\
\url{http://sites.math.rutgers.edu/~zeilberg/tokhniot/oSALIKHOVpi4.txt}.

Most of the choices of $(A,B)$ give negative, useless, deltas, but\,---\,{\bf surprise!}\,---\,the choice of
$A=2$, $B=3$  yielded  that the smallest $\delta$ in the range $290 \leq n \leq 300$ was $0.16605428729395818514$.
This beats the analogous value for the $A=3$, $B=5$ case, that equals $0.15727140930557009691$.
The `bronze medal' was won by $A=5$, $B=8$ that was almost as good: $0.15701995819256081077$;
followed by $A=8$, $B=13$ that gave the respectable $0.15586354092162189848$.
Next in line was a non-Fibonacci $A=7$, $B=10$ that placed fifth, with  $0.12451550531454231901$.
For all the other `empirical deltas' see the above output file.

Once we found out that $A=2$, $B=3$ was a good gamble, we had another pleasant surprise.
We can replace $n$ by $n/2$ and still get combinations of $1$ and $\pi$
(in the original case $A=3$, $B=5$ of Salikhov, the odd indices $n$ give combinations of $1$ and $\arctan(1/7)$).
This simplifies the recurrence, and a fully rigorous proof of the cruder upper bound of $10.747747465671804677\dots$ can be found here:\\
\url{http://sites.math.rutgers.edu/~zeilberg/tokhniot/oSALIKHOVpi3.txt}.

In order to get the more refined upper bound, we had to resort to non-rigorous estimates.

Luckily it was possible to make everything fully rigorous, and this brings us to Part~\ref{partII}.

\part{A fully rigorous (human-generated) proof \\ of the claimed upper bound for the irrationality measure of~$\pi$}
\label{partII}

\subsection*{Test bunny}
For $n=0,1,2,\dots$, our integrals in question are
\begin{align}
I_n
&=5i\int_{4-2i}^{4+2i}\frac{(x-5)^{2n}(x-4+2i)^{2n}(x-4-2i)^{2n}(x-6+2i)^{2n}(x-6-2i)^{2n}}
{x^{3n+1}(x-10)^{3n+1}}\,\d x
\nonumber\\
&=i(-1)^{n+1}\int_{-1-2i}^{-1+2i}\frac{5x^{2n}(x+1+2i)^{2n}(x+1-2i)^{2n}(x-1+2i)^{2n}(x-1-2i)^{2n}}
{(5+x)^{3n+1}(5-x)^{3n+1}}\,\d x.
\label{eq:1}
\end{align}
These are from the winning family in Part~\ref{partI}.

\subsection*{Arithmetic}
The integrand
\begin{align}
R(x)
&=\frac{5x^{2n}(x+1+2i)^{2n}(x+1-2i)^{2n}(x-1+2i)^{2n}(x-1-2i)^{2n}}
{(5+x)^{3n+1}(5-x)^{3n+1}}
\nonumber\\
&=\frac{5x^{2n}(x^4+6x^2+25)^{2n}}{(5+x)^{3n+1}(5-x)^{3n+1}}
\label{eq:2}
\end{align}
possesses the symmetry $R(-x)=R(x)$ and therefore can be written as
\begin{equation}
R(x)=P(x)+\sum_{j=0}^{3n}\biggl(\frac{A_j}{(5+x)^{j+1}}+\frac{A_j}{(5-x)^{j+1}}\biggr)
\label{eq:3}
\end{equation}
for some rational $A_j$ and a polynomial $P(x)\in\mathbb Z[x^2]$ of degree $4n-2$.

\begin{lemma}
\label{lem1}
The coefficients $A_j$ in the partial-fraction expansion \eqref{eq:3} satisfy
\begin{equation}
2^{-\lfloor(5n+3j)/2\rfloor+1}5^{-j}A_j\in\mathbb Z
\quad\text{for}\; j=0,1,\dots,3n.
\label{eq:inc}
\end{equation}
In particular, they are integers.
\end{lemma}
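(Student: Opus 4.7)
The plan is to express $A_j$ as a Taylor coefficient and perform two successive changes of variable to isolate the powers of $5$ and $2$. By the residue formula, $A_j$ equals the coefficient of $y^{3n-j}$ at $y=0$ in
$$\phi(y) = \frac{5(y-5)^{2n}\bigl((y-5)^4+6(y-5)^2+25\bigr)^{2n}}{(10-y)^{3n+1}}.$$
The substitution $y=5w$ (using $x^4+6x^2+25=(x^2+2x+5)(x^2-2x+5)$) yields
$$\phi(5w) = \frac{5^{3n}(w-1)^{2n}(5w^2-8w+4)^{2n}(5w^2-12w+8)^{2n}}{(2-w)^{3n+1}},$$
exhibiting the factor of $5^j$ in $A_j$, and a further substitution $w=2t$ gives
$$A_j = 5^j\cdot 2^{2n+j-1}\cdot[t^{3n-j}]F(t), \quad F(t) := \frac{(2t-1)^{2n}(5t^2-4t+1)^{2n}(5t^2-6t+2)^{2n}}{(1-t)^{3n+1}}.$$
Since $(1-t)^{-(3n+1)}$ has integer Taylor coefficients, $[t^{3n-j}]F(t)\in\mathbb Z$, which already proves $A_j\in\mathbb Z$ and $5^j\mid A_j$.

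To upgrade the $2$-adic bound to $v_2(A_j)\geq\lfloor(5n+3j)/2\rfloor-1$, I introduce $\xi:=4t(1-t)$. Direct computation gives $(2t-1)^2=1-\xi$ and $(5t^2-4t+1)(5t^2-6t+2)=\tfrac{1}{16}Q(\xi)$ with $Q(\xi):=25\xi^2-56\xi+32$, so
$$F(t) = \frac{(1-\xi)^n Q(\xi)^{2n}}{2^{8n}(1-t)^{3n+1}}.$$
Writing $(1-\xi)^n Q(\xi)^{2n}=\sum_k a_k\xi^k$ and using $\xi^k=4^kt^k(1-t)^k$ to cancel $k$ factors of $(1-t)$ from the denominator gives
$$[t^m]F(t) = \sum_{k=0}^{m}\frac{a_k\,\binom{3n+m-2k}{m-k}}{2^{8n-2k}} \qquad (m=3n-j).$$

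The hard part is bounding $v_2(a_k)$. Since $v_2(32)=5$, $v_2(56)=3$, $v_2(25)=0$, the multinomial expansion of $Q(\xi)^{2n}$ shows that every contribution to $[\xi^k]Q(\xi)^{2n}$ has $v_2\geq 10n-2k-\lfloor k/2\rfloor=10n-\lfloor 5k/2\rfloor$, and multiplication by $(1-\xi)^n$ preserves this bound. Consequently each summand above has $v_2\geq 10n-\lfloor 5k/2\rfloor-(8n-2k)=2n-\lfloor k/2\rfloor\geq 2n-\lfloor m/2\rfloor$, which is at least $2n-\lceil m/2\rceil$. Plugging back yields $v_2(A_j)\geq(2n+j-1)+(2n-\lceil(3n-j)/2\rceil)=\lfloor(5n+3j)/2\rfloor-1$ as required. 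The main obstacle is recognizing the parametrization $\xi=4t(1-t)$; once $F$ is written in terms of $\xi$, the $2$-adic structure is dictated by the coefficients of $Q$ and the analysis reduces to bookkeeping on multinomial valuations.
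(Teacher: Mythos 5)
Your proof is correct, and it takes a genuinely different route from the paper's. The paper computes $A_j=D_{3n-j}\bigl((x+5)^{3n+1}R(x)\bigr)$ by Leibniz's rule, obtaining an explicit six-fold multinomial sum over $\cM_j$ whose terms carry Gaussian-integer factors $(1\pm i)^{-m_4},(1\pm i)^{-m_5},(2\pm i)^{m}$; the power of $2$ is then extracted termwise from $2^{4n-1+j+m_1}$ together with the bound $m_4+m_5\le 3n-j$ and $(1\pm i)^2=\pm2i$, and the final descent from $\mathbb Z[i]$ to $\mathbb Z$ uses $A_j\in\mathbb Q$. You instead stay entirely in $\mathbb Q$: the rescalings $y=5w$ and $w=2t$ cleanly strip off the factors $5^j$ and $2^{2n+j-1}$ (already enough for integrality), and the remaining $\lfloor(n+j)/2\rfloor$ powers of $2$ come from the substitution $\xi=4t(1-t)$, which works because $5t^2-4t+1$ and $5t^2-6t+2$ are exchanged by $t\mapsto1-t$ so their product is a polynomial in $t(1-t)$; the $2$-adic bookkeeping on $Q(\xi)=25\xi^2-56\xi+32$ then delivers exactly the exponent $\lfloor(5n+3j)/2\rfloor-1$ of the lemma (I checked the valuation estimate $v_2\bigl([\xi^k]Q^{2n}\bigr)\ge 10n-\lfloor 5k/2\rfloor$ and the final arithmetic identity $4n+j-1-\lceil(3n-j)/2\rceil=\lfloor(5n+3j)/2\rfloor-1$; both are right). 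Your version buys a Gaussian-integer-free argument and makes the source of each prime power structurally transparent, at the cost of the less obvious discovery step $\xi=4t(1-t)$; the paper's version is a more direct, if heavier, termwise computation whose multi-sum formula \eqref{eq:A_j} is reused later (in Lemmas~\ref{lem2} and~\ref{lem3}), so if you adopted your route you would still need an analogue of that formula for the subsequent arithmetic.
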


\begin{proof}
To compute $A_j$, introduce linear operators
$$
D_m\colon f(x)\mapsto\frac1{m!}\,\frac{\d^mf(x)}{\d x^m}\bigg|_{x=-5}.
$$
Then with the help of Leibniz's formula we deduce
\begin{align}
A_j
&=D_{3n-j}\bigl((x+5)^{3n+1}R(x)\bigr)
\displaybreak[2]\nonumber\\
&=5\sum_{\substack{m_0,m_1,\dots,m_5\ge0 \\ m_1,\dots,m_5\le 2n \\ m_0+m_1+\dots+m_5=3n-j}}
D_{m_0}(5-x)^{-3n-1}
D_{m_1}x^{2n}D_{m_2}(x+1+2i)^{2n}D_{m_3}(x+1-2i)^{2n}
\nonumber\\[-21pt] &\qquad\qquad\qquad\qquad\qquad\qquad\qquad\times
D_{m_4}(x-1+2i)^{2n}D_{m_5}(x-1-2i)^{2n}
\displaybreak[2]\nonumber\\
&=5\sum_{\bm\in\cM_j}(-1)^{m_1+\dots+m_5}T(\bm)10^{-3n-1-m_0}5^{2n-m_1}
(4-2i)^{2n-m_2}(4+2i)^{2n-m_3}
\nonumber\\[-6pt] &\qquad\qquad\qquad\qquad\qquad\times
(6-2i)^{2n-m_4}(6+2i)^{2n-m_5}
\displaybreak[2]\nonumber\\
&=\sum_{\bm\in\cM_j}(-1)^{m_1+\dots+m_5}T(\bm)
2^{4n-1+j+m_1}(1-i)^{-m_4}(1+i)^{-m_5}
\nonumber\\[-6pt] &\qquad\qquad\qquad\qquad\times
5^j(2+i)^{m_2+m_5}(2-i)^{m_3+m_4}
\label{eq:A_j}
\end{align}
for $j=0,\dots,3n$, where the summation is over the multi-indices $\bm=(m_0,\dots,m_5)$ from the set
\begin{align}
\cM_j
=\{(m_0,m_1,\dots,m_5):\,
& m_0,m_1,\dots,m_5\ge0;\ m_1,\dots,m_5\le 2n;\
\nonumber\\ &\quad
m_0+m_1+\dots+m_5=3n-j\}
\subset\mathbb Z_{\ge0}^6
\nonumber
\end{align}
and
\begin{equation}
T(\bm)=T(m_0,m_1,\dots,m_5)
=\binom{3n+m_0}{m_0}\prod_{\ell=1}^5\binom{2n}{m_\ell}\in\mathbb Z.
\nonumber
\end{equation}
Now $m_4+m_5\le3n-j$ and $(1\pm i)^2=\pm2i$; hence
$$
2^{\lceil(3n-j)/2\rceil}\times(1-i)^{-m_4}(1+i)^{-m_5}\in\mathbb Z[i]
$$
and
$$
2^{-\lfloor(5n+3j)/2\rfloor+1}\times2^{4n-1+j+m_1}(1-i)^{-m_4}(1+i)^{-m_5}\in\mathbb Z[i].
$$
Therefore, $2^{-\lfloor(5n+3j)/2\rfloor+1}5^{-j}A_j\in\mathbb Z[i]$ and the result follows from using the fact
that $A_j\in\mathbb Q$.
\end{proof}

Formula \eqref{eq:A_j} for the coefficients $A_j$ makes sense for \emph{any} integer $j\le3n$;
it generates the coefficients in the Laurent series expansion of $R(x)$ at $x=-5$. More precisely,
\begin{equation*}
R(x)
=\sum_{k=-3n}^\infty A_{-k}(x+5)^{k-1}
=\sum_{j=0}^{3n}\frac{A_j}{(x+5)^{j+1}}
+\sum_{k=1}^\infty A_{-k}(x+5)^{k-1}.
\end{equation*}
Note that $A_j$ produced by formula \eqref{eq:A_j} are not necessarily integral for \emph{negative} $j$ but at least they satisfy $10^{-j}A_j\in\mathbb Z$ for $j=-1,-2,\dots,-(4n-1)$ on the grounds of the formula; and we also have $10^{-j}A_j\in\mathbb Z$ for $j=0,1,2,\dots,3n$ in accordance with Lemma~\ref{lem1}.
Furthermore,
$$
\sum_{j=0}^{3n}\frac{A_j}{(5-x)^{j+1}}
=\sum_{j=0}^{3n}\frac{A_j}{(10-(x+5))^{j+1}}
=\sum_{j=0}^{3n}A_j\sum_{k=1}^\infty\binom{j+k-1}j\frac{(x+5)^{k-1}}{10^{j+k}};
$$
comparing the last two expansions with \eqref{eq:3} we find out that
\begin{align*}
P(x)
&=R(x)-\sum_{j=0}^{3n}\biggl(\frac{A_j}{(5+x)^{j+1}}+\frac{A_j}{(5-x)^{j+1}}\biggr)
\\
&=\sum_{k=1}^\infty\biggl(A_{-k}
-\sum_{j=0}^{3n}\binom{j+k-1}j\frac{A_j}{10^{j+k}}\biggr)(x+5)^{k-1}.
\end{align*}
On the other hand, $P(x)$ is a \emph{polynomial} of degree $4n-2$, hence
\begin{equation}
P(x)
=\sum_{k=1}^{4n-1}\biggl(A_{-k}
-\sum_{j=0}^{3n}\binom{j+k-1}j\frac{A_j}{10^{j+k}}\biggr)(x+5)^{k-1}.
\label{eq:x+5}
\end{equation}

\begin{lemma}
\label{lem2}
Any prime from the set
$$
\cP_n=\biggl\{p>\max\{5,\sqrt{3n}\}:\frac12\le\Bigl\{\frac np\Bigr\}<\frac23\biggr\}
\subset\{p\; \text{prime}:5<p\le2n\}
$$
satisfies the following property\textup: if $p\mid j$ for $j\in\{-4n+1,-4n+2,\dots,3n\}$,
then $A_j\equiv0\pmod p$ \textup(in other words, $p\mid 10^{-j}A_j$\textup).
\textup{(Here $\{x\}=x-\lfloor x\rfloor$ denotes the fractional part of the number.)}
\end{lemma}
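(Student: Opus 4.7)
The plan is to establish a Frobenius-type factorization of $R(x)$ modulo $p$, and then reduce the claim to the vanishing of a single polynomial Taylor coefficient. Writing $n = qp + r$ with $r \in [p/2, 2p/3)$ and setting $\rho := 2r - p \in [0, p/3)$ and $\sigma := 3r - p \in [p/2, p)$, the bound $p > \sqrt{3n}$ ensures that $n, 2n, 3n$ have at most two base-$p$ digits. I would apply the Frobenius identities $(5 \pm x)^p \equiv 5 \pm x^p \pmod p$ and $(x^4 + 6x^2 + 25)^p \equiv (x^p)^4 + 6(x^p)^2 + 25 \pmod p$ to both numerator and denominator of $R(x)$, obtaining a factorization $R(x) \equiv R_1(x) \cdot R_2(x^p) \pmod p$ in $\mathbb{F}_p(x)$, where $R_1$ carries the ``small'' exponents $\rho$ and $\sigma+1$ and $R_2$ carries the ``Frobenius'' exponents $2q+1$ and $3q+1$. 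Passing to the local variable $y = x + 5$ at the pole $x = -5$, this becomes
\[
(x+5)^{3n+1} R(x) \equiv G_1(y)\, G_2(y^p) \pmod p,
\]
where $G_1(y) = 5 f(y)^\rho / (10 - y)^{\sigma+1}$ and $f(y) := (y-5)\bigl((y-5)^4 + 6(y-5)^2 + 25\bigr)$ is the degree-$5$ polynomial whose roots are precisely the five special points $5, 4 \pm 2i, 6 \pm 2i$ of the integrand.

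Using $A_j = [y^{3n-j}]\bigl((x+5)^{3n+1} R(x)\bigr)$ --- valid for $j \in [0, 3n]$ directly, and for $j \in [-4n+1, -1]$ through the Laurent extension of formula \eqref{eq:A_j} noted right after Lemma~\ref{lem1} --- and expanding $G_2(y^p) = \sum_k g_{2,k}\, y^{pk}$, I obtain
\[
A_j \equiv \sum_{k \ge 0} g_{2,k} \cdot g_{1,\,3n - j - pk} \pmod p.
\]
When $p \mid j$, every index $3n - j - pk$ is $\equiv \sigma \pmod p$, so the lemma reduces to the central claim: $g_{1,m} \equiv 0 \pmod p$ for all $m \ge 0$ with $m \equiv \sigma \pmod p$. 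To prove this, I expand $G_1$ and multiply by the mod-$p$ unit $10^{\sigma+m+1}/5$ to get $\sum_{j=0}^{5\rho} \binom{\sigma + m - j}{\sigma}\, 10^j\, [y^j] f(y)^\rho$. With $m = \sigma + \ell p$, Lucas' theorem applied to the binomials $\binom{(\ell+1)p + (3\rho - j)}{\sigma}$ --- using the crucial inequality $\sigma > 3\rho$, which follows from $\sigma = (3\rho + p)/2$ together with $3\rho < p$ --- annihilates all summands except those with $j \in (3\rho, \sigma]$.

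Substituting $s = j - 3\rho$ and invoking the upper-negation identity $\binom{p - s}{\sigma} \equiv (-1)^\sigma \binom{\sigma + s - 1}{\sigma} \pmod p$, I extend the range of $s$ from $[1, \sigma - 3\rho]$ up to $[1, 2\rho]$; this extension is harmless modulo $p$, since a second Lucas application shows $\binom{\sigma + s - 1}{\sigma} \equiv 0 \pmod p$ for $s > p - \sigma$. Cauchy's coefficient identity then recasts the extended sum as
\[
10 \cdot [y^{3\rho+1}]\,\frac{y^{\sigma+1}\,f(y)^\rho}{(y - 10)^{\sigma+1}},
\]
which vanishes even over $\mathbb{Z}$ because the bracketed rational function has a zero of order $\sigma + 1 > 3\rho + 1$ at $y = 0$. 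The main obstacle I anticipate lies in keeping the Frobenius factorization rigorous through the passage from $\mathbb{F}_p(x)$ to formal Laurent coefficient extraction at $y = 0$, and in the systematic Lucas bookkeeping that whittles the sum down to $j \in (3\rho, \sigma]$; once those are in place, the final vanishing is automatic from the inequality $\sigma > 3\rho$ (essentially a restatement of the hypothesis $\{n/p\} \ge 1/2$), while the complementary bound $\{n/p\} < 2/3$ simply guarantees $\rho < p/3$ so that no third base-$p$ digit ever arises.
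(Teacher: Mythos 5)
Your overall strategy is genuinely different from the paper's (which rewrites $A_j$ as a double binomial sum, reduces it to a von Szily/super-Catalan-type sum, and combines Gessel's generating-function argument with $p$-adic valuations of factorials), and most of your reduction is sound: the Frobenius factorization $(x+5)^{3n+1}R(x)\equiv G_1(y)G_2(y^p)\pmod p$ in $\mathbb Z_{(p)}[[y]]$ is legitimate because the denominators are units at $y=0$, the identification $A_j=[y^{3n-j}]\bigl((x+5)^{3n+1}R(x)\bigr)$ covers the whole range $j\in\{-4n+1,\dots,3n\}$, and the Lucas bookkeeping that reduces everything to the sum $\sum_{s=1}^{2\rho}\binom{\sigma+s-1}{\sigma}10^{s-1}c_{3\rho+s}$ with $c_j=[y^j]f(y)^\rho$ checks out (including the harmless extension of the range of $s$).

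However, the very last step --- the one you call ``automatic'' --- is wrong as stated. The generating function of $\binom{\sigma+s-1}{\sigma}10^{s-1}$ is $(1-10/y)^{-\sigma-1}=y^{\sigma+1}/(y-10)^{\sigma+1}$ expanded \emph{at infinity}, so Cauchy's identity gives
$$
\sum_{s\ge1}\binom{\sigma+s-1}{\sigma}10^{s-1}c_{3\rho+s}
=\bigl[y^{3\rho+1}\bigr]_{\infty}\,\frac{y^{\sigma+1}f(y)^\rho}{(y-10)^{\sigma+1}}
=\operatorname*{Res}_{y=10}\,\frac{y^{\sigma-3\rho-1}f(y)^\rho}{(y-10)^{\sigma+1}},
$$
the coefficient in the Laurent expansion at $y=\infty$, \emph{not} the Taylor coefficient at $y=0$. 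The latter is indeed $0$ because the function vanishes to order $\sigma+1$ at the origin, but the two differ exactly by the residue at the pole $y=10$, and that residue does not vanish over $\mathbb Z$. Concretely, for $p=7$ and $n\equiv4\pmod 7$ one has $\rho=1$, $\sigma=5$, $f(y)=y^5-25y^4+256y^3-1340y^2+3600y-4000$, and the sum equals $\binom55c_4+\binom65\cdot10\cdot c_5=-25+60=35$: divisible by $7$, but certainly not zero. So the divisibility by $p$ of the quantity $\frac1{\sigma!}\frac{\d^\sigma}{\d y^\sigma}\bigl(y^{\sigma-3\rho-1}f(y)^\rho\bigr)\big|_{y=10}$ is a genuine arithmetic fact that your argument does not prove; it is precisely here that the inequalities $\frac12\le\{n/p\}<\frac23$ must do nontrivial work (in the paper this work is done by showing that all binomial coefficients $\binom{3n}{n+n_1+n_2-kp}$, or else an explicit factorial ratio, have positive $p$-adic order). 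Until this residue computation is supplied, the proof is incomplete.
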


\begin{proof}
In order to establish the claim, we will cast the coefficients $A_j$ in \eqref{eq:A_j} differently.
Observe that
\begin{align*}
R(x)
&=\frac{5x^{2n}(x^2+(3-4i))^{2n}(x^2+(3+4i))^{2n}}{(25-x^2)^{3n+1}}
\\
&=5\sum_{n_1,n_2\ge0}\binom{2n}{n_1}\binom{2n}{n_2}(3-4i)^{2n-n_1}(3+4i)^{2n-n_2}
\frac{x^{2(n+n_1+n_2)}}{(25-x^2)^{3n+1}}
\end{align*}
and
\begin{align*}
\frac{x^{2m}}{(25-x^2)^{3n+1}}
&=\frac{(5-(x+5))^{2m}}{(x+5)^{3n+1}(10-(x+5))^{3n+1}}
\\
&=\frac{5^{2m}10^{-3n-1}}{(x+5)^{3n+1}}
\sum_{k=0}^\infty\frac{(x+5)^k}{10^k}\sum_{n_0\ge0}(-2)^{n_0}\binom{2m}{n_0}\binom{3n+k-n_0}{3n},
\end{align*}
hence
\begin{align*}
A_j
&=\sum_{n_1,n_2\ge0}(3-4i)^{2n-n_1}(3+4i)^{2n-n_2}5^{2n+2n_1+2n_2+1}10^{-(6n-j)-1}
\\ &\qquad\times
\binom{2n}{n_1}\binom{2n}{n_2}Z(n,n_1+n_2,j),
\end{align*}
where
$$
Z(n,m,j)=\sum_{n_0\ge0}(-2)^{n_0}\binom{2n+2m}{n_0}\binom{6n-j-n_0}{3n}.
$$
This means that our lemma is a consequence of the following divisibility property:
If a prime $p\in\cP_n$ divides $j$, then it also divides
$$
\hat T(n,n_1,n_2)=\binom{2n}{n_1}\binom{2n}{n_2}Z(n,n_1+n_2,j)
$$
for any $n_1,n_2\ge0$.

From now on, we will repeatedly use the fact that the $p$-adic order of $N!$ satisfies
$\ord_pN!=\lfloor N/p\rfloor=N/p-\{N/p\}$ when $p>\sqrt N$. In particular,
\begin{equation}
\ord_p\binom{2n}{n_\ell}=\lfloor2\omega\rfloor-\lfloor2\omega-\omega_\ell\rfloor-\lfloor\omega_\ell\rfloor
=\lfloor2\omega\rfloor-\lfloor2\omega-\omega_\ell\rfloor
\quad\text{for}\;\ell=1,2,
\label{eq:pord}
\end{equation}
where the fractional parts $\omega=\{n/p\}$, $\omega_1=\{n_1/p\}$ and $\omega_2=\{n_2/p\}$ all belong to the interval $[0,1)$.
Since $p\in\cP_n$, we have $\omega\in[\frac12,\frac23)$, so that $\lfloor2\omega\rfloor=\lfloor3\omega\rfloor=1$.
If at least one of the $p$-adic orders in \eqref{eq:pord} is positive then immediately $\ord_p\hat T(n,n_1,n_2)\ge1$ establishing the required divisibility; therefore, it remains to analyze the remaining situations assuming
$\lfloor2\omega-\omega_\ell\rfloor=\lfloor2\omega\rfloor=1$
for $\ell=1,2$;
in other words, assuming
\begin{equation*}
2\omega-\omega_1\ge1 \quad\text{and}\quad 2\omega-\omega_2\ge1.
\end{equation*}

The binomial sums $Z(n,m,j)$ can be realized as a terminating $_2F_1$ hypergeometric function,
to which several classical transformations can be applied. For example, it can be transformed into
\begin{align*}
Z(n,m,j)
&=\sum_{n_0\ge0}(-1)^{n_0}\binom{2n+2m}{n_0}\binom{6n-2(n+m)-j}{3n-j-n_0}
\\
&=(-1)^{n+m}\sum_{k\in\mathbb Z}(-1)^k\binom{2n+2m}{n+m+k}\binom{4n-2m-j}{2n-m-k}.
\end{align*}
Though the expression does not possess a closed form in general, its particular instance $j=0$ reduces to the \emph{super Catalan numbers}
\begin{equation*}
\frac{(2N)!\,(2M)!}{N!\,(N+M)!\,M!}
=\sum_{k=-\infty}^{\infty}(-1)^k\binom{2N}{N+k}\binom{2M}{M+k};
\end{equation*}
see \cite{Ges92}, also for the historical reference of this identity due to K.~von Szily (1894).
The argument in \cite[Sect.~6]{Ges92} shows that the more general sum
$$
\sum_{k=-\infty}^{\infty}(-1)^k\binom{2N}{N+k}\binom{2M-j}{M+k}
$$
is the coefficient of $t^{2N}$ in the polynomial
$$
(-1)^N\frac{(2N)!\,(2M-j)!}{(N+M)!\,(N+M-j)!}\,(1+t)^{N+M}(1-t)^{N+M-j}.
$$
In our situation $N=n+m$, $M=2n-m$ with $m=n_1+n_2$, the factorial-ratio factor
$$
\frac{(2N)!\,(2M-j)!}{(N+M)!\,(N+M-j)!}
=\frac{(2n+2n_1+2n_2)!\,(4n-2n_1-2n_2-j)!}{(3n)!\,(3n-j)!}
$$
has the nonnegative $p$-adic order
\begin{align*}
&
\lfloor2\omega+2\omega_1+2\omega_2\rfloor
+\lfloor4\omega-2\omega_1-2\omega_2-j/p\rfloor
-\lfloor3\omega\rfloor-\lfloor3\omega-j/p\rfloor
\\ &\quad
=\lfloor2\omega+2\omega_1+2\omega_2\rfloor
+\lfloor4\omega-2\omega_1-2\omega_2\rfloor
-2\lfloor3\omega\rfloor
\end{align*}
(we use $j/p\in\mathbb Z$), because $\lfloor3\omega\rfloor=1$,
$$
2\omega+2\omega_1+2\omega_2\ge2\omega\ge1
\quad\text{and}\quad
4\omega-2\omega_1-2\omega_2\ge4\omega-4(2\omega-1)=4(1-\omega)>\frac43.
$$
Moreover, if this $p$-adic order is \emph{positive} then $Z(n,n_1+n_2,j)$ is divisible by $p$, hence the divisibility of $\hat T(n,n_1,n_2)$ follows. Thus, we are left with the situation when this order is zero,
$$
\lfloor2\omega+2\omega_1+2\omega_2\rfloor=\lfloor4\omega-2\omega_1-2\omega_2\rfloor=1,
$$
meaning that
\begin{equation}
2\omega+2\omega_1+2\omega_2<2 \quad\text{and}\quad 4\omega-2\omega_1-2\omega_2<2.
\label{eq:ineq2}
\end{equation}
We have to show that the coefficient of $t^{2N}$ in $(1+t)^{N+M}(1-t)^{N+M-j}$ is divisible by $p$.

Denoting $r=-j/p\in\mathbb Z$ and using the ``Freshman's Dream Identity'' 
$(1-t)^p\equiv1-t^p\pmod p$  in the ring $\mathbb Z[[t]]$, we find out that
\begin{align*}
&
(1+t)^{N+M}(1-t)^{N+M-j}
=(1-t^2)^{N+M}(1-t)^{-j}
\\ &\quad
\equiv(1-t^2)^{N+M}(1-t^p)^r
=\sum_{k_1\ge0}(-1)^{k_1}\binom{N+M}{k_1}t^{2k_1}
\sum_{k_2\ge0}(-1)^{k_2}\binom r{k_2}t^{pk_2},
\end{align*}
hence the coefficient of $t^{2N}$ is congruent to
$$
\sum_{k=0}^{\lfloor N/p\rfloor}(-1)^{k+N}\binom{N+M}{N-kp}\binom r{2k}
$$
modulo $p$. The $p$-adic order of the \emph{nonzero} binomial coefficients $\binom{N+M}{N-kp}$ does not depend on~$k$:
\begin{align*}
\ord_p\binom{N+M}{N-kp}
&=-\biggl\{\frac Np+\frac Mp\biggr\}+\biggl\{\frac Np-k\biggr\}+\biggl\{\frac Mp+k\biggr\}
\\
&=-\biggl\{\frac Np+\frac Mp\biggr\}+\biggl\{\frac Np\biggr\}+\biggl\{\frac Mp\biggr\}.
\end{align*}
Recalling that $N=n+m$, $M=2n-m$ with $m=n_1+n_2$ the latter quantity reads
$$
\ord_p\binom{3n}{n+n_1+n_2}
=\lfloor3\omega\rfloor-\lfloor\omega+\omega_1+\omega_2\rfloor-\lfloor2\omega-\omega_1-\omega_2\rfloor=1,
$$
where we employed \eqref{eq:ineq2} to get
$$
\lfloor\omega+\omega_1+\omega_2\rfloor
=\lfloor2\omega-\omega_1-\omega_2\rfloor
=0.
$$
This means that all binomial coefficients  $\binom{N+M}{N-kp}$ are divisible by $p$, thus completing our proof of the divisibility of $\hat T(n,n_1,n_2)$ by $p$, and of the lemma.
\end{proof}

\begin{remark}
An earlier version of the lemma claimed that any prime $p\in\cP_n$, $p\mid j$ for $j\in\{-4n,-4n+1,\dots,3n\}$,
divides $T(\bm)$ for all $\bm\in\cM_j$; this would clearly imply the present statement in view of formula~\eqref{eq:A_j}.
However, the claim about the divisibility properties of $T(\bm)$ was false.
\end{remark}

\begin{lemma}
\label{lem3}
Define $\Phi=\Phi_n=\prod_{p\in\cP_n}p$ and
\begin{equation}
L_n=\frac{\lcm(1,2,\dots,4n)}{\Phi_n}\in\mathbb Z.
\nonumber
\end{equation}
Then
\begin{equation}
L_n\times\frac{10^{-j}A_j}j\in\mathbb Z
\quad\text{for}\; j\in\{-4n,-4n+1,\dots,3n-1,3n\}, \; j\ne0,
\label{eq:inc1}
\end{equation}
and $\Phi_n^{-1}\times A_0\in\mathbb Z$.
\end{lemma}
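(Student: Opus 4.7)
The claim $\Phi_n^{-1}A_0\in\mathbb Z$ is an immediate consequence of Lemma~\ref{lem2}: taking $j=0$, every prime $p\in\cP_n$ trivially satisfies $p\mid j$, so $p\mid A_0$ for each such $p$, hence $\Phi_n=\prod_{p\in\cP_n}p$ divides $A_0$.

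For the main statement \eqref{eq:inc1}, my plan is a $p$-adic verification: fix a prime $p$ and show $\ord_p(L_n\cdot 10^{-j}A_j/j)\ge 0$ for each admissible $j$. The two inputs are (a) the integrality $10^{-j}A_j\in\mathbb Z$ recorded for $j\in\{-(4n-1),\dots,3n\}$ in the remarks preceding the lemma, and (b) Lemma~\ref{lem2}. I will split on whether $p\in\cP_n$ and whether $p\mid j$. If $p\notin\cP_n$, then $\ord_p(L_n)=\ord_p(\lcm(1,\dots,4n))\ge\ord_p(j)$ (since $|j|\le 4n$), and $\ord_p(10^{-j}A_j)\ge 0$ closes the case. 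If $p\in\cP_n$ but $p\nmid j$, the inclusion $\cP_n\subset\{p:5<p\le 2n\}$ gives $\ord_p(\lcm(1,\dots,4n))\ge 1$, which absorbs the single factor of $p$ removed in forming $L_n=\lcm(1,\dots,4n)/\Phi_n$. The decisive case is $p\in\cP_n$ with $p\mid j$: here the naive bound on $\ord_p(L_n/j)$ is $-1$, and Lemma~\ref{lem2} supplies the compensating divisibility $p\mid 10^{-j}A_j$.

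The main obstacle I foresee is the endpoint $j=-4n$, which lies outside both the range of Lemma~\ref{lem2} and the preceding integrality remark. I plan to handle it in two moves. First, since $P(x)$ has degree $4n-2$, the coefficient of $(x+5)^{4n-1}$ in the expansion \eqref{eq:x+5} must vanish, yielding
\[
10^{4n}A_{-4n}=\sum_{j=0}^{3n}\binom{j+4n-1}{j}\cdot 10^{-j}A_j,
\]
which is manifestly an integer by Lemma~\ref{lem1}. Second, no $p\in\cP_n$ can divide $-4n$: the condition $\{n/p\}\ge\tfrac12$ forces $p\nmid n$, and $p>5$ excludes $p=2$. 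Consequently the decisive case $p\in\cP_n$ with $p\mid j$ never arises for $j=-4n$, and the preceding case analysis applies without alteration.
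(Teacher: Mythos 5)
Your proof is correct and follows essentially the same route as the paper: combining $\lcm(1,\dots,4n)\cdot\frac1j\in\mathbb Z$ with the divisibility $p\mid 10^{-j}A_j$ from Lemma~\ref{lem2}, prime by prime. You are in fact more careful than the paper at the endpoint $j=-4n$ (where Lemma~\ref{lem2} and the stated integrality remark do not apply), and both of your fixes there\,---\,the vanishing of the coefficient of $(x+5)^{4n-1}$ and the observation that no $p\in\cP_n$ divides $4n$\,---\,are valid.
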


Asymptotically,
\begin{equation}
\lim_{n\to\infty}\frac{\log\Phi_n}n
=\frac{\Gamma'(2/3)}{\Gamma(2/3)}-\frac{\Gamma'(1/2)}{\Gamma(1/2)}
=\frac\pi{2\sqrt3}-\log\frac{3\sqrt3}4
=0.64527561\dotsc
\label{eq:Phi}
\end{equation}
(see \cite[Lemma 2.2]{Hat93}).

\begin{proof}
Note that
$$
\lcm(1,2,\dots,4n)\times\frac1j\in\mathbb Z
\quad\text{for}\; j\in\{-4n,-4n+1,\dots,3n\}, \; j\ne0,
$$
implying, for all such~$j$,
\begin{equation}
L_n\cdot\frac1{j/p}\in\mathbb Z
\quad\text{if}\; p\mid j, \; p\in\cP_n.
\label{eq:5}
\end{equation}
On the other hand, it follows from formula \eqref{eq:A_j} and Lemma~\ref{lem2} that
\begin{equation}
\frac{10^{-j}A_j}p\in\mathbb Z
\quad\text{if}\; p\mid j, \; p\in\cP_n.
\label{eq:6}
\end{equation}
Combining \eqref{eq:5} and \eqref{eq:6} results in claim~\eqref{eq:inc1}.
\end{proof}

\begin{lemma}
\label{lem4}
Write the polynomial $P(x)\in\mathbb Z[x]$ in the decomposition \eqref{eq:3} as
\begin{equation}
P(x)=\sum_{k=0}^{4n-2}B_k(x+1+2i)^k
\qquad\text{with}\quad B_k\in\mathbb Z[i] \quad\text{for}\; k=0,1,\dots,4n-2.
\label{eq:P}
\end{equation}
Then
\begin{equation}
2^{-\lfloor5n/2\rfloor+\lceil3k/2\rceil+2}\times B_k\in\mathbb Z[i]
\quad\text{for}\; k=0,1,\dots,4n-2.
\label{eq:inc2}
\end{equation}
\end{lemma}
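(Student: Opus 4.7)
The plan is to compute $B_k=\frac{1}{k!}P^{(k)}(-1-2i)$ directly, using the decomposition $P(x)=R(x)-\sum_{j=0}^{3n}A_j[(5+x)^{-j-1}+(5-x)^{-j-1}]$ from \eqref{eq:3}.

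For the partial-fractions contribution, term-by-term differentiation at $x=-1-2i$ yields
$$
\sum_{j=0}^{3n}A_j\binom{j+k}{k}\bigl[(-1)^k(4-2i)^{-j-1-k}+(6+2i)^{-j-1-k}\bigr].
$$
I would exploit the identity $6+2i=(1+i)(4-2i)$ to rewrite the bracketed expression as $(4-2i)^{-j-1-k}[(-1)^k+(1+i)^{-(j+1+k)}]$. Since $4-2i=-i(1+i)^2(2-i)$ with $2-i$ coprime to $1+i$ in $\mathbb{Z}[i]$, the power of $1+i$ dividing $(4-2i)^{-N}$ is $-2N$ while that dividing $(1+i)^{-N}$ is $-N$. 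Combined with Lemma~\ref{lem1}'s estimate $\ord_2(A_j)\ge\lfloor(5n+3j)/2\rfloor-1$, this produces an explicit $(1+i)$-adic lower bound on each summand, and minimization over $j$ (attained at $j=0$) gives exactly the target $2^{-\lfloor 5n/2\rfloor+\lceil 3k/2\rceil+2}B_k\in\mathbb{Z}[i]$ in the case when $k$ is odd.

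The $R$-contribution vanishes for $k<2n$ because $R(x)=(x+1+2i)^{2n}G(x)$ with $G$ holomorphic at $-1-2i$; for $k\ge 2n$ it equals the $(k-2n)$-th Taylor coefficient of $G$ at that point, which is bounded by a direct analysis of the factored form of $G$. The main obstacle will be the even-$k$ case, where the naive term-by-term estimate falls exactly one factor of $(1+i)$ short of the target. Recovering this missing factor will likely require passing through the $(x+5)$-basis expansion~\eqref{eq:x+5} together with the substitution $x+5=(4-2i)+(x+1+2i)$, which supplies the extra power $(4-2i)^{\ell-1-k}=2^{\ell-1-k}(2-i)^{\ell-1-k}$, and supplementing Lemma~\ref{lem1} with an analogous divisibility bound on the negative-index Taylor coefficients $A_{-\ell}$, derived directly from~\eqref{eq:A_j} by careful tracking of the $(1\pm i)$-factors under the multi-index constraints on $\bm$.
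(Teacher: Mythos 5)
Your treatment of the range $k<2n$ starts exactly as the paper's does: the $R$-contribution vanishes because $R$ has a zero of order $2n$ at $-1-2i$, so $B_k=-\sum_{j=0}^{3n}\binom{j+k}{k}A_j\bigl((-1)^k(4-2i)^{-j-k-1}+(6+2i)^{-j-k-1}\bigr)$, and one estimates this $(1+i)$-adically via $4-2i=2(2-i)$, $6+2i=2(1+i)(2-i)$ together with Lemma~\ref{lem1}, stripping the $(2-i)$-powers at the end because $2-i$ and $1+i$ are non-associate Gaussian primes and $B_k\in\mathbb Z[i]$. But two pieces of your plan are genuinely off. First, the case $k\ge2n$: you propose to bound the $(k-2n)$-th Taylor coefficient of $G=R/(x+1+2i)^{2n}$ at $-1-2i$. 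This is both unnecessary and a dead end --- the whole point of that range is that the exponent $-\lfloor5n/2\rfloor+\lceil3k/2\rceil+2$ is already nonnegative once $k\ge2n$, so \eqref{eq:inc2} there is immediate from $B_k\in\mathbb Z[i]$ with no estimate at all. Your route would moreover still have to control the partial-fraction contribution to $B_k$ for those $k$ (it does not vanish), which the proposal never addresses.

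Second, and more seriously, the even-$k$ case is not proved: ``will likely require passing through the $(x+5)$-basis expansion~\eqref{eq:x+5} \dots and supplementing Lemma~\ref{lem1} with a divisibility bound on $A_{-\ell}$'' is a plan, not an argument, and it is not the paper's plan. The paper disposes of all $k<2n$ uniformly, deducing $2^{-\lfloor5n/2\rfloor+\lceil3k/2\rceil+2}(2-i)^{3n+k+1}B_k\in\mathbb Z[i]$ term by term from \eqref{eq:inc} and then removing $(2-i)^{3n+k+1}$. Your diagnosis that the term-by-term count can come up one factor of $1+i$ short is not baseless if one uses only the floor in \eqref{eq:inc} (the tight case is when $n$, $j$ and $k$ are all even, not merely $k$ even); but the natural resource for the missing power is a sharper $2$-adic input on $A_j$ itself --- e.g.\ reading $\ord_{1+i}A_j\ge2(4n-1+j)-(m_4+m_5)\ge5n+3j-2$ directly off formula \eqref{eq:A_j} --- rather than re-expanding $P$ in powers of $x+5$ and chasing the negative-index coefficients $A_{-\ell}$, which would drag in the entirely separate machinery of Lemmas~\ref{lem2} and~\ref{lem3} and for which you supply no actual estimate. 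As it stands, the proposal establishes the lemma for neither the even-$k$ case nor the $k\ge2n$ case.
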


\begin{proof}
If $k\ge2n$ then $-\lfloor5n/2\rfloor+\lceil3k/2\rceil+2\ge0$ and the inclusion in \eqref{eq:inc2} follows from $B_k\in\mathbb Z[i]$.
Therefore, we only need to verify \eqref{eq:inc2} for $k<2n$;
since $R(x)$ from \eqref{eq:2} has a zero of order $2n$ at $x=-1-2i$, we deduce from~\eqref{eq:3} that
\begin{align*}
B_k
&=-\frac1{k!}\,\frac{\d^k}{\d x^k}\sum_{j=0}^{3n}\biggl(\frac{A_j}{(5+x)^{j+1}}+\frac{A_j}{(5-x)^{j+1}}\biggr)\bigg|_{x=-1-2i}
\\
&=-\sum_{j=0}^{3n}(-1)^k\binom{j+k}k\biggl(\frac{A_j}{(5+x)^{j+k+1}}+(-1)^{j+1}\frac{A_j}{(x-5)^{j+k+1}}\biggr)\bigg|_{x=-1-2i}
\\
&=-\sum_{j=0}^{3n}\binom{j+k}k\biggl(\frac{(-1)^kA_j}{(2(2-i))^{j+k+1}}+\frac{A_j}{(2(1+i)(2-i))^{j+k+1}}\biggr)\end{align*}
for $k=0,1,\dots,2n-1$. It follows then from \eqref{eq:inc} that
\begin{equation*}
2^{-\lfloor5n/2\rfloor+\lceil3k/2\rceil+2}(2-i)^{3n+k+1}\times B_k\in\mathbb Z[i]
\end{equation*}
and, again, we recall $B_k\in\mathbb Z[i]$ to conclude with~\eqref{eq:inc2} for $k<2n$.
\end{proof}

\begin{lemma}
\label{lem5}
For the polynomial $P(x)$ in the decomposition \eqref{eq:3}, we have
\begin{equation}
2^{-\lfloor5n/2\rfloor}L_n\times i\int_{-1-2i}^{-1+2i}P(x)\,\d x\in\mathbb Z.
\label{eq:A}
\end{equation}
\end{lemma}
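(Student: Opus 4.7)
The plan is to convert $i\int_{-1-2i}^{-1+2i} P(x)\,\d x$ into an explicit arithmetic combination of $A_0,\ldots,A_{3n}$ and $A_n$, and then apply Lemmas~\ref{lem1} and~\ref{lem3} to check integrality prime-by-prime. First I would integrate the partial-fraction identity \eqref{eq:3} term by term along the contour. The $j\ge 1$ principal parts each yield $-\frac{2iA_j}{j}\bigl(\frac{I_j}{20^j}+\frac{J_j}{40^j}\bigr)$ with $I_j:=\Im(4+2i)^j$ and $J_j:=\Im(6+2i)^j$, while the $j=0$ pieces combine, using $\frac{(4+2i)(6+2i)}{(4-2i)(6-2i)}=\frac{1+i}{1-i}=i$, into $-iA_0\log i=A_0\pi/2$. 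Combined with $i\int R(x)\,\d x=(-1)^{n+1}I_n=(-1)^{n+1}(A_n+B_n\pi)$ from \eqref{eq:1} and the fact that $i\int P(x)\,\d x$ is real (since $P\in\mathbb{R}[x^2]$ and the contour is symmetric about the real axis), the $\pi$-coefficient must vanish, leaving
\begin{equation*}
i\int_{-1-2i}^{-1+2i} P(x)\,\d x = (-1)^{n+1}A_n + 2\sum_{j=1}^{3n}\frac{A_j}{j}\left[\frac{I_j}{20^j}+\frac{J_j}{40^j}\right].
\end{equation*}

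Using $4+2i=2(2+i)$ and $6+2i=2(3+i)$, simplify $I_j/20^j=\Im(2+i)^j/10^j$ and $J_j/40^j=\Im(3+i)^j/20^j$. Multiplying by $2^{-\lfloor 5n/2\rfloor}L_n$: the leading $A_n$ term is an integer because Lemma~\ref{lem1} with $j=n$ gives $2^{4n-1}5^n\mid A_n$ while $4n-1\ge\lfloor 5n/2\rfloor$ for $n\ge 1$; by Lemma~\ref{lem3}, $K_j:=L_nA_j/(j\cdot 10^j)\in\mathbb{Z}$, so the remaining sum reduces to
\begin{equation*}
2^{1-\lfloor 5n/2\rfloor}\sum_{j=1}^{3n}K_j\!\left(\Im(2+i)^j+\frac{\Im(3+i)^j}{2^j}\right).
\end{equation*}
Each summand is automatically $p$-adically integral for every odd prime $p$, since $K_j$, $\Im(2+i)^j$, and $\Im(3+i)^j$ are rational integers and $2^j$ is a $p$-unit for odd~$p$.

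The main obstacle will be the $2$-adic verification of this last sum. For this, I would combine three inputs: the lower bound $\ord_2 A_j\ge\lfloor(5n+3j)/2\rfloor-1$ from Lemma~\ref{lem1} (producing a matching lower bound on $\ord_2 K_j$); the trivial inequality $\ord_2 j\le\lfloor\log_2 j\rfloor\le\lfloor\log_2 4n\rfloor=\ord_2 L_n$; and the divisibility $2^{\lfloor j/2\rfloor}\mid\Im(3+i)^j$, which follows from the factorization $3+i=(1+i)(2-i)$ together with $(1+i)^2=2i$. A short case analysis on the parities of $n$ and $j$ then confirms that each summand has $2$-adic valuation at least $\lfloor 5n/2\rfloor-1$; the inequalities are tight and must be tracked carefully through the floor functions.
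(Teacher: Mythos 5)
There is a fatal circularity in your key identity. You write $i\int P(x)\,\d x=(-1)^{n+1}A_n+2\sum_{j\ge1}\frac{A_j}{j}[\cdots]$, where the first term comes from $I_n=A_n+B_n\pi$ --- so this $A_n$ is the \emph{rational part of the integral} $I_n$ (what the paper calls $a_n$), not the partial-fraction coefficient $A_j$ with $j=n$. These are entirely different objects that merely collide in your notation. Lemma~\ref{lem1} applies only to the latter, so the step ``Lemma~\ref{lem1} with $j=n$ gives $2^{4n-1}5^n\mid A_n$'' does not control the term you need. Worse, there is no non-circular way to control it: by construction $a_n$ \emph{is} $(-1)^{n+1}i\int P\,\d x$ plus the rational contributions of the $j\ge1$ principal parts, so your identity is a tautology --- it moves $i\int P\,\d x$ to the other side of its own definition and yields no new arithmetic information. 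The integrality of $a_n'$ is precisely what Proposition~\ref{prop1} is building toward via Lemmas~\ref{lem5} and~\ref{lem6}; you cannot assume it here.

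What is missing is a second, independent handle on the polynomial $P(x)$ itself. The paper obtains one by expanding $P$ in powers of $(x+1+2i)$ (Lemma~\ref{lem4}), which upon term-by-term integration gives $-\sum_k\frac{2^{2k+2}B_k}{k+1}i^k$ and controls the powers of $2$ (and $5$); and a second by expanding $P$ in powers of $(x+5)$ as in \eqref{eq:x+5}, whose coefficients involve the Laurent coefficients $A_{-k}$, $k=1,\dots,4n-1$, of $R$ at $x=-5$ together with the $A_j/10^{j+k}$ tails. Lemma~\ref{lem3} (which covers negative indices $j$ as well) then shows the second expression lies in $10^{-4n}L_n^{-1}\mathbb Z$, handling every prime $p\ne2,5$ with the $\Phi_n$ saving; combining the two inclusions, and using that the integral is rational, gives \eqref{eq:A}. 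Your treatment of the $j\ge1$ sum is essentially a re-derivation of Lemma~\ref{lem6} and is fine as far as it goes, but without something playing the role of the $(x+1+2i)$- and $(x+5)$-expansions of $P$, the argument cannot close.
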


\begin{proof}
We first compute the integral using representation~\eqref{eq:P}:
\begin{align*}
i\int_{-1-2i}^{-1+2i}P(x)\,\d x
&=i\sum_{k=0}^{4n-2}B_k\int_{-1-2i}^{-1+2i}(x+1+2i)^k\,\d x
\displaybreak[2]\\
&=i\sum_{k=0}^{4n-2}\frac{B_k}{k+1}(4i)^{k+1}
=-\sum_{k=0}^{4n-2}\frac{2^{2k+2}B_k}{k+1}\,i^k
\end{align*}
implying
\begin{equation}
2^{-\lfloor5n/2\rfloor}\lcm(1,2,\dots,4n)\times i\int_{-1-2i}^{-1+2i}P(x)\,\d x\in\mathbb Z[i]
\label{eq:A1}
\end{equation}
on the basis of Lemma~\ref{lem4}. On the other hand, if representation \eqref{eq:x+5} is applied then
\begin{align*}
i\int_{-1-2i}^{-1+2i}P(x)\,\d x
&=i\sum_{k=1}^{4n-1}\biggl(A_{-k}-\sum_{j=0}^{3n}\binom{j+k-1}j\frac{A_j}{10^{j+k}}\biggr)
\int_{-1-2i}^{-1+2i}(x+5)^{k-1}\,\d x
\\
&=i\sum_{k=1}^{4n-1}\biggl(\frac{A_{-k}}k
-\sum_{j=0}^{3n}\binom{j+k-1}j\frac{A_j}{k\,10^{j+k}}\biggr)
\bigl((4+2i)^k-(4-2i)^k\bigr)
\displaybreak[2]\\
&=\sum_{k=1}^{4n-1}\biggl(\frac{A_{-k}}k-\frac{A_0}k\,\frac1{10^k}
-\sum_{j=1}^{3n}\binom{j+k-1}{j-1}\frac{A_j}{j}\,\frac{1}{10^{j+k}}\biggr)
\\ &\qquad\times
2^{k+1}\sum_{\substack{\ell=0\\\ell\;\text{odd}}}^k\binom k\ell(-1)^{(\ell+1)/2}2^{k-\ell}
\end{align*}
is a rational number satisfying
\begin{equation}
\frac{\lcm(1,2,\dots,4n)}{\Phi_n}\times i\int_{-1-2i}^{-1+2i}P(x)\,\d x\in10^{-4n}\mathbb Z
\label{eq:A2}
\end{equation}
on the basis of Lemma~\ref{lem3}. Finally, the two inclusions \eqref{eq:A1} and \eqref{eq:A2} combine into~\eqref{eq:A}.
\end{proof}

\begin{lemma}
\label{lem6}
For the partial-fraction part in \eqref{eq:3}  \textup(without the $j=0$ term\textup), we have
\begin{equation}
2^{-\lfloor5n/2\rfloor+1}L_n\times i\int_{-1-2i}^{-1+2i}
\sum_{j=1}^{3n}A_j\biggl(\frac1{(5+x)^{j+1}}+\frac1{(5-x)^{j+1}}\biggr)\,\d x\in\mathbb Z.
\nonumber
\end{equation}
\end{lemma}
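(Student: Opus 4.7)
My plan is to evaluate the integral in closed form and then verify integrality of the outcome one prime at a time, invoking Lemma~\ref{lem3} at primes $p\ne2$ and the sharper 2-adic bound from Lemma~\ref{lem1} at~$p=2$.

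Computing the antiderivatives $-1/[j(5+x)^j]$ and $1/[j(5-x)^j]$, evaluating at the endpoints $-1\pm2i$, and pairing conjugate contributions via $(4\pm2i)(4\mp2i)=20$, $(6\pm2i)(6\mp2i)=40$ together with the normalizations $4\pm2i=2(2\pm i)$, $6\pm2i=2(3\pm i)$, yields the closed form
\begin{equation*}
i\int_{-1-2i}^{-1+2i}\sum_{j=1}^{3n}A_j\biggl(\frac1{(5+x)^{j+1}}+\frac1{(5-x)^{j+1}}\biggr)\,\d x
=-2\sum_{j=1}^{3n}\frac{A_j}{j\cdot10^j}\biggl(\Im\bigl((2+i)^j\bigr)+\frac{\Im\bigl((3+i)^j\bigr)}{2^j}\biggr).
\end{equation*}
Since the only non-integer denominators on the right are powers of~$2$, the task reduces to proving the 2-adic estimate $v_2\ge\lfloor5n/2\rfloor-1$: at every prime $p\ne2$, Lemma~\ref{lem3} together with the integrality of the imaginary parts immediately gives $v_p\ge0$.

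For the 2-adic analysis I would combine Lemma~\ref{lem1}'s sharp bound $v_2(A_j)\ge\lfloor(5n+3j)/2\rfloor-1$ with the observation $v_2(L_n)\ge v_2(j)$ (valid because $\Phi_n$ is a product of odd primes and $j\le4n$), plus the key extra input, the factorization $3+i=(1+i)(2-i)$ in $\mathbb Z[i]$: since $v_2(1+i)=1/2$ while $(2-i)$ has odd norm, one gets $v_2((3+i)^j)=j/2$ and therefore $v_2(\Im((3+i)^j))\ge\lfloor j/2\rfloor$. Assembling these estimates, each $\Im((2+i)^j)$-summand contributes $v_2\ge\lfloor(5n+3j)/2\rfloor-1-j$, and each $\Im((3+i)^j)/2^j$-summand contributes $v_2\ge\lfloor(5n+3j)/2\rfloor+\lfloor j/2\rfloor-2j-1$. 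A brief parity split using $\lfloor(5n+3j)/2\rfloor\ge\lfloor5n/2\rfloor+\lfloor3j/2\rfloor$ and the identity that $\lfloor3j/2\rfloor+\lfloor j/2\rfloor$ equals $2j$ for even~$j$ and $2j-1$ for odd~$j$ shows both bounds are at least $\lfloor5n/2\rfloor-2$, sharp precisely in the second case for odd~$j$; the prefactor $-2$ then lifts the 2-adic valuation to the required $\lfloor5n/2\rfloor-1$. I expect this last parity accounting to be the only real obstacle; everything else is routine term-by-term integration and bookkeeping.
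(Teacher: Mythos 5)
Your proof is correct and follows the same route as the paper's: evaluate the integral in closed form via antiderivatives, rewrite the endpoint values using $4\pm2i=2(2\pm i)$ and $6\pm2i=2(1\pm i)(2\mp i)$, and then invoke Lemmas~\ref{lem1} and~\ref{lem3}. The paper's own proof stops at ``the inclusions of Lemma~\ref{lem1} and~\ref{lem3}''; your explicit $2$-adic bookkeeping (in particular the bound $v_2(\Im((3+i)^j))\ge\lfloor j/2\rfloor$ and the parity split showing the worst case $\lfloor5n/2\rfloor-2$ is rescued by the prefactor $-2$) correctly fills in exactly the details the paper leaves to the reader.
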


\begin{proof}
This follows from
\begin{align*}
&
i\sum_{j=1}^{3n}A_j\int_{-1-2i}^{-1+2i}\biggl(\frac1{(5+x)^{j+1}}+\frac1{(5-x)^{j+1}}\biggr)\,\d x
\\ &\quad
=i\sum_{j=1}^{3n}\frac{A_j}j
\biggl(\frac1{(4-2i)^j}-\frac1{(4+2i)^j}-\frac1{(6+2i)^j}+\frac1{(6-2i)^j}\biggr)
\\ &\quad
=i\sum_{j=1}^{3n}\frac{A_j}j
\biggl(\frac{(2+i)^j}{2^j5^j}-\frac{(2-i)^j}{2^j5^j}-\frac{(2+i)^j}{2^j(1+i)^j5^j}+\frac{(2-i)^j}{2^j(1-i)^j5^j}\biggr)
\in\mathbb Q
\end{align*}
and the inclusions of Lemma~\ref{lem1} and~\ref{lem3}.
\end{proof}

Lemma~\ref{lem1} and the integrality of $L_n$ imply that $2^{-\lfloor5n/2\rfloor+1}L_n\times A_0\in\mathbb Z$;
together with the calculation
\begin{align*}
\int_{-1-2i}^{-1+2i}\biggl(\frac1{5+x}+\frac1{5-x}\biggr)\d x
&=\log(4+2i)-\log(4-2i)-\log(6-2i)+\log(6+2i)
\\
&=\frac{\pi i}2
\end{align*}
and Lemmas~\ref{lem5}, \ref{lem6} we are thus led to the following statement.

\begin{proposition}
\label{prop1}
For the integrals $I_n$ in \eqref{eq:1}, we have
\begin{equation}
2^{-\lfloor5n/2\rfloor+2}L_n\times I_n\in\mathbb Z+\mathbb Z\pi.
\nonumber
\end{equation}
\end{proposition}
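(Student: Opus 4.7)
The plan is to substitute the partial-fraction decomposition \eqref{eq:3} into the second line of \eqref{eq:1} and integrate termwise, then assemble the arithmetic bookkeeping. Concretely, since
$$
I_n=i(-1)^{n+1}\int_{-1-2i}^{-1+2i} R(x)\,\d x,
$$
I would split $R(x)$ as the polynomial piece $P(x)$, the $j=0$ term $A_0\bigl((5+x)^{-1}+(5-x)^{-1}\bigr)$, and the remaining partial-fraction piece with $j\ge1$, and integrate each part separately along the vertical segment from $-1-2i$ to $-1+2i$.

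For the polynomial piece, Lemma~\ref{lem5} already gives $2^{-\lfloor 5n/2\rfloor}L_n \cdot i\int P(x)\,\d x\in\mathbb Z$, which a fortiori lies in $2^{-2}\mathbb Z$ after multiplying by the extra factor $4$ present in $2^{-\lfloor 5n/2\rfloor+2}L_n$. For the higher-order partial-fraction piece, Lemma~\ref{lem6} gives $2^{-\lfloor 5n/2\rfloor+1}L_n$ times that integral lies in $\mathbb Z$, hence the same integral times $2^{-\lfloor 5n/2\rfloor+2}L_n$ lies in $2\mathbb Z\subset\mathbb Z$. These two contributions account for the rational part in $\mathbb Z+\mathbb Z\pi$.

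The $\pi$ comes from the $j=0$ term: the computation immediately preceding the proposition shows
$$
\int_{-1-2i}^{-1+2i}\Bigl(\frac1{5+x}+\frac1{5-x}\Bigr)\,\d x=\frac{\pi i}{2},
$$
so this term contributes $i\cdot A_0\cdot\frac{\pi i}{2}=-\tfrac12 A_0\pi$ to $\int R(x)\,\d x$. Multiplying by the prefactor gives $-2^{-\lfloor 5n/2\rfloor+1}L_n A_0\pi$, and Lemma~\ref{lem1} at $j=0$ says $2^{-\lfloor 5n/2\rfloor+1}A_0\in\mathbb Z$, so upon multiplying by the integer $L_n$ this is an integer multiple of $\pi$.

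Finally the global prefactor $i(-1)^{n+1}$ in $I_n$ simply changes the signs of the three contributions and does not leave $\mathbb Z+\mathbb Z\pi$. I do not anticipate a real obstacle: the three lemmas were engineered precisely to handle the three pieces. The only thing to watch is that the exponent $-\lfloor 5n/2\rfloor+2$ in the proposition is exactly the worst-case among the exponents $-\lfloor 5n/2\rfloor$, $-\lfloor 5n/2\rfloor+1$, $-\lfloor 5n/2\rfloor+1$ that appear in Lemmas~\ref{lem5}, \ref{lem6}, and the $j=0$ computation respectively, so the common normalization $2^{-\lfloor 5n/2\rfloor+2}L_n$ does clear every denominator simultaneously.
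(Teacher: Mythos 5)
Your proposal is correct and is essentially the paper's own argument: split $R(x)$ via \eqref{eq:3} into the polynomial part, the $j=0$ partial fraction, and the $j\ge1$ partial fractions, then invoke Lemmas~\ref{lem5}, \ref{lem6}, the evaluation $\int_{-1-2i}^{-1+2i}\bigl((5+x)^{-1}+(5-x)^{-1}\bigr)\,\d x=\pi i/2$, and Lemma~\ref{lem1} at $j=0$. One small remark on your closing sentence: the exponent $+2$ is not the maximum of the offsets $0,+1,+1$ coming from the three lemmas (that maximum is $+1$); rather it is forced by the $j=0$ term, where the factor $\tfrac12$ in $\pi i/2$ combines with the requirement $2^{-\lfloor5n/2\rfloor+1}A_0\in\mathbb Z$ to demand exactly $2^{-\lfloor5n/2\rfloor+2}$ on the prefactor---which your main computation does handle correctly.
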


\subsection*{Asymptotics}
By now we have legally settled down that $I_n=a_n+b_n\pi$ for some \emph{rational} $a_n$ and $b_n$.

\begin{proposition}
\label{prop2}
The asymptotics of the integrals $I_n$ and the coefficients $b_n$ in the representation $I_n=a_n+b_n\pi$ is as follows:
$$
\limsup_{n\to\infty}|I_n|^{1/n}=|N_1|=0.029458495928\dots
\quad\text{and}\quad
\lim_{n\to\infty}b_n^{1/n}=N_3=21851.691396\dots,
$$
where
$$
N_{1,2}=0.02930189\hdots \pm i\,0.00303351\dots,
\quad
N_3=21851.691396\dots
$$
are the zeros of polynomial
\begin{equation}
108N^3-2359989N^2+138304N-2048.
\label{eq:ind}
\end{equation}
\end{proposition}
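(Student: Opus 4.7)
The plan is to combine the third-order Almkvist--Zeilberger recurrence satisfied by $I_n$ (mentioned in Part~\ref{partI}) with a steepest-descent analysis of the contour integral~\eqref{eq:1}. Rewriting the integrand from~\eqref{eq:2} as $5\,u(x)^n/(25-x^2)$ with phase $u(x)=x^2(x^4+6x^2+25)^2/(25-x^2)^3$, the large-$n$ asymptotics are governed by the saddle points of $u$. Setting $y=x^2$, the equation $u'(x)=0$ reduces to the cubic $2y^3-125y^2-500y-625=0$, which, having negative discriminant, has one real root $y_0\approx 66.3$ and a complex-conjugate pair of roots. Eliminating $y$ between this cubic and $N=-u(y)$ by resultant produces, up to a numerical factor, the polynomial~\eqref{eq:ind}; the overall sign flip absorbs the factor $(-1)^{n+1}$ in~\eqref{eq:1}. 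The three roots of~\eqref{eq:ind} are therefore identified both with the values of $-u$ at the three saddles and with the characteristic roots at infinity of the recurrence, with $N_3$ coming from the real saddle $x_*=-\sqrt{y_0}\approx-8.14$ and $N_1, N_2$ from the complex-conjugate pair.

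For $\limsup_n |I_n|^{1/n}=|N_1|$, I would deform the vertical segment $[-1-2i,-1+2i]$ into a steepest-descent contour. Of the six saddles of $u$ in the $x$-plane, only the complex-conjugate pair nearest to the segment, at which $|u|=|N_1|=|N_2|$, is reachable by a continuous deformation avoiding the poles at $x=\pm 5$. Classical saddle-point asymptotics then give $I_n=c\,N_1^n+\bar c\,N_2^n+\text{lower order}$, establishing the limsup; the oscillation coming from $\arg N_1$ is what prevents it from being an honest limit.

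For $\lim_n b_n^{1/n}=N_3$, note that the only term in the partial fraction~\eqref{eq:3} contributing a multiple of $\pi$ upon integration over the segment is the $j=0$ term, using $\int_{-1-2i}^{-1+2i}\bigl(\frac1{5+x}+\frac1{5-x}\bigr)\,\d x=\pi i/2$; this yields the explicit identity $b_n=(-1)^n A_0/2$ with $A_0=\operatorname{Res}_{x=-5}R(x)$. Cauchy's formula expresses $A_0=\frac1{2\pi i}\oint_{C_{-5}}R(x)\,\d x$ along any simple loop around $-5$ not enclosing $+5$. I would deform $C_{-5}$ into a steepest-descent contour through the real saddle $x_*$ (feasible because $x_*<-5$), at which $|u(x_*)|=N_3$. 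Steepest descent then yields $|A_0|^{1/n}\to N_3$, and tracking the sign factor $u(x_*)^n(-1)^n$ gives $\lim b_n^{1/n}=N_3$.

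The main obstacle is the rigorous justification of the two contour deformations and the non-vanishing of the leading saddle-point coefficients (a Hessian check, since none of the six saddles is degenerate). A cleaner alternative relies on Poincar\'e--Perron applied to the recurrence: both $a_n$ and $b_n$ satisfy the same third-order recurrence as $I_n$ (by $\mathbb{Q}$-linear independence of $1$ and $\pi$), so $b_n^{1/n}\in\{|N_1|,N_3\}$; since $I_n\to 0$ and $\pi$ is irrational, $b_n$ cannot lie entirely in the slow-growth subspace, which forces $b_n^{1/n}\to N_3$.
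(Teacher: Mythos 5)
Your proposal matches the paper's proof in essentially every particular: the rigorous route via the Almkvist--Zeilberger recurrence and the Poincar\'e (Perron) lemma, and the analytic route via the substitution $y=x^2$, the saddle-point cubic $2y^3-125y^2-500y-625=0$ with one real root $\approx 66.34$ and a conjugate pair, deformation of the segment through the two complex saddles for $I_n$, and a Cauchy/residue integral for $b_n$ (equivalently $A_0$) through the real saddle. The paper is no more detailed than you are on the deformation and non-degeneracy issues you flag, so nothing further is needed.
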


\begin{proof}
This rigorously follows from the Poincar\'e lemma supplied by the rigorously produced\,---\,thanks to the Almkvist--Zeilberger method \cite{AZ90}\,---\,difference equation for the integrals $I_n$ (hence also for $a_n,b_n$), 
whose indicial polynomial (more precisely: the indicial polynomial of its `constant-coefficients approximation') is exacty~\eqref{eq:ind}.
Observe that $|I_n|\le1$ follows from integrating over the \emph{line} interval $[-1-2i,-1+2i]$ and trivially bounding the absolute value of the integrand on it.
However, those who prefer traditional analytical methods can have fun going through the glorious details of the saddle-point method, at least after the change of variables $y=x^2$ is performed in \eqref{eq:1}.
For that one deals with the function
$$
\tilde R(y)=\frac{5g(y)^n}{y-25}, \quad\text{where}\; g(y)=\frac{y(y^2+6y+25)^2}{(y-25)^3},
$$
and with the zeros
$$
y_{1,2}=-1.91975076\hdots \mp i\,1.01250889\dots,
\quad
y_3=66.33950152\dots
$$
of
$$
\frac{g'(y)}{g(y)}=\frac{2y^3-125y^2-500y-625}{y(y^2+6y+25)(y-25)}.
$$
Then $N_j=g(y_j)$ for $j=1,2,3$. The remaining part is performing a suitable deformation of path in \eqref{eq:1} to pass through the saddle points $\sqrt{y_1}$ and $\sqrt{y_2}$ (with the choice of branch such the real parts of the roots are negative) and writing a Cauchy integral for $b_n$ over a closed contour passing through the saddle points $\pm\sqrt{y_3}$.
\end{proof}

\subsection*{World record}
It follows from Propositions~\ref{prop1} and \ref{prop2} that the forms
$$
I_n'=2^{-\lfloor5n/2\rfloor+2}L_nI_n=a_n'+b_n'\pi,
\quad\text{where}\; n=0,1,2,\dots,
$$
all have integral coefficients $a_n',b_n'$ and the asymptotics
\begin{align*}
\limsup_{n\to\infty}\frac{\log|I_n'|}n
&=\log|N_1|-\frac52\,\log2+4-\frac\pi{2\sqrt3}+\log\frac{3\sqrt3}4
=-1.90291648559998\dots
\\ \intertext{and}
\lim_{n\to\infty}\frac{\log b_n}n
&=\log N_3-\frac52\,\log2+4-\frac\pi{2\sqrt3}+\log\frac{3\sqrt3}4
=11.613890045331\dots
\end{align*}
(the asymptotics of $L_n$ follows from the Prime Number Theorem and \eqref{eq:Phi}).
This implies (see, e.g., \cite[Lemma 1]{Sal10}) that the irrationality measure of $\pi$ is bounded above by
$$
1+\frac{11.613890045331\dots}{1.90291648559998\dots}
=7.10320533413700172750577342281\dotsc.
$$


\end{document}